\documentclass[1t2pt]{amsart}
\usepackage{amsfonts}
\usepackage[cp1250]{inputenc}
\usepackage{amssymb}
\usepackage{amsmath}
\usepackage{amsthm}
\usepackage{latexsym}
\usepackage{amsbsy}
\usepackage{graphicx}
\usepackage{color}
\usepackage{boxedminipage}%%%%%%%%%
\usepackage{geometry}
\usepackage[cp1250]{inputenc}
\usepackage{graphicx}
\usepackage{cite}%%%%%%%%%%

\newtheorem{theorem}{Theorem}
\newtheorem*{ttheorem}{Theorem}

\newtheorem*{theoremY}{Hayashi's Theorem}
\newtheorem*{theoremS}{Sarason's Theorem}
\newtheorem*{theoremS1}{ Theorem S1}
\newtheorem*{theoremS2}{ Theorem S2}

\newtheorem{proposition}{Proposition}
\newtheorem*{pproposition}{Proposition}
\newtheorem{lemma}{Lemma}
\newtheorem{corollary}{Corollary}
\theoremstyle{remark}
\newtheorem*{remark}{Remark}
\def\C{\mathbb{C}}
\def\D{\mathbb{D}}
\def\T{\mathbb{T}}

\def\H{\mathcal{H}}
\def\M{\mathcal{M}}%

 \textwidth=16truecm
 \textheight=21truecm
 \oddsidemargin0cm
 \evensidemargin0cm

 \begin{document}

 \title{De Branges-Rovnyak spaces and local Dirichlet spaces of higher order}
 
 \author{Bartosz {\L}anucha, Ma{\l}gorzata Michalska, Maria Nowak, Andrzej So{\l}tysiak}

 \address{
 	Bartosz {\L}anucha  \newline Institute of Mathematics
 	\newline Maria Curie-Sk{\l}odowska University \newline pl. M.
 	Curie-Sk{\l}odowskiej 1 \newline 20-031 Lublin, Poland}
 \email{bartosz.lanucha@mail.umcs.pl}
 
 \address{
 	Ma{\l}gorzata Michalska  \newline Institute of Mathematics
 	\newline Maria Curie-Sk{\l}odowska University \newline pl. M.
 	Curie-Sk{\l}odowskiej 1 \newline 20-031 Lublin, Poland}
 \email{malgorzata.michalska@mail.umcs.pl}

\address{Maria Nowak
\newline Institute of Mathematics
\newline Maria Curie-Sk{\l}odowska University
\newline pl. M.
Curie-Sk{\l}odowskiej 1
\newline 20-031 Lublin, Poland}
\email{maria.nowak@mail.umcs.pl}

\address{Andrzej So{\l}tysiak
\newline Faculty of Mathematics and Computer Science
\newline Adam Mickiewicz University \newline ul. Uniwersytetu Pozna\'nskiego 4
\newline 61-614 Pozna\'n, Poland}
\email{asoltys@amu.edu.pl}

\subjclass [2010]{47B35, 30H10}

\keywords{Hardy space, de Branges-Rovnyak spaces, local Dirichlet space, wandering vector}

\thispagestyle{empty}
\begin{abstract} We discuss de Branges-Rovnyak spaces $\mathcal H(b)$ generated by nonextreme and rational functions $b$  and  local Dirichlet spaces of   order $m$  introduced  in \cite{LuoGuRich}. In  \cite{LuoGuRich} the authors   characterized nonextreme $b$ for which the  operator $Y=S|_{\mathcal H(b)}$, the restriction of the shift operator $S$ on $H^2$ to $\mathcal H(b)$, is  a strict $2m$-isometry and proved that such spaces $\mathcal H (b)$ are equal to local Dirichlet spaces of  order $m$. Here we give a characterization of local Dirichlet spaces of   order $m$  in terms of the $m$-th  derivatives that is a generalization of a known  result on  local Dirichlet spaces.
We also  find  explicit formulas for $b$  in the case when $\mathcal H(b)$ coincides with local Dirichlet space of   order $m$ with equality of norms. Finally, we prove  a property  of wandering vectors of $Y$   analogous to the property of  wandering vectors of the restriction of $S$ to  harmonically  weighted  Dirichlet  spaces obtained by D. Sarason in \cite{Sarason3}.
\end{abstract}
\maketitle

\noindent
\section{Introduction}
\vspace{.2in}

Let $\D$ denote the open unit disc in the complex plane $\C$, and let $\T=\partial\D$. For $\varphi\in L^{\infty}(\T)$ the Toeplitz operator on the Hardy space $H^2$ of the  disc $\D$ is defined by $T_{\varphi}f=P(\varphi f)$, where $P$ is the orthogonal projection of $L^2(\T)$ onto $H^2$. In particular, denote $S=T_{z}$.

For a function $b$ in the closed unit ball of $H^{\infty}$,
the \textit{de Branges--Rovnyak space $\mathcal{H}(b)$} is the image
of $H^2$ under the operator $(I-T_bT_{\overline{b}})^{1/2}$ with the corresponding range norm $\|\cdot\|_b$ (see the books \cite{Sarason1}, \cite{FM} and references given there).
It is known that $\mathcal{H}(b)$ is a Hilbert space with
reproducing kernel
  $$k_w^b(z)=\frac{1-\overline{b(w)}b(z)}{1-\overline{w}z}\quad(z,w\in\mathbb{D}).$$
In the case $b$ is an inner function,
$\mathcal H(b)= H^2 \ominus b H^2$
is the so called model space.

The theory of  $\H(b)$ spaces divides into two cases, according to whether $b$ is or is not an extreme point of the closed unit
ball of $H^{\infty}$. An important property of $\mathcal{H}(b)$ spaces is that they are $S^*$-invariant and in the case when $b$ is nonextreme they are also $S$-invariant. Moreover, for nonextreme $b$ the operator $Y=S_{|\mathcal{H}(b)}$ is bounded and it expands the norm.

Here we will  concentrate on the case when the function $b$ is not an extreme point
of the unit ball of $H^{\infty}$ (equivalently, $\log(1-|b|^2) $ is integrable on $\T)$.
Then there exists an outer
function $a\in H^{\infty}$ for which $|a|^2+|b|^2=1$ a.e. on
$\mathbb{T}$. Moreover, if we suppose  $a(0)>0$, then $a$ is
uniquely determined, and we say that $(b,a)$ is a pair.
If $(b,a)$ is a pair, then the quotient $\varphi=\frac{b}{a}$ is in the
Smirnov class $\mathcal{N}^{+}$.
Conversely,  every
nonzero function $\varphi\in\mathcal{N}^{+}$ has a unique representation (the so called canonical representation)
$\varphi=\frac{b}{a}$, where $(b,a)$ is a pair. In this case $T_{\varphi}$ is an unbounded operator on $H^2$ with the domain $\mathfrak
D(T_{\varphi})= \{f\in H^2: \ \varphi f\in H^2\}=aH^2$. Thus $T_{\varphi}$ is densely defined and closed. Consequently, its adjoint
$T^\ast_{\varphi}= T_{\overline{\varphi}}$ is densely defined and closed. Moreover,
$\mathfrak D(T_{\overline{\varphi}})= \mathcal H(b)$
and for $f\in\mathcal{H}(b)$,
  \begin{equation}
    \|f\|_{b}^2=\|f\|_{2}^2+\|T_{\overline{\varphi}}f\|_{2}^2.\label{norm1}
  \end{equation}
For more details on de Branges--Rovnyak spaces connection with unbounded Toeplitz operators see \cite{Sarason4}.

In papers \cite{Sarason2}, \cite{ransford}, \cite{ransford2} and \cite{LN} relations between de Branges-Rovnyak spaces and harmonically weighted Dirichlet spaces have been studied.

For a finite measure $\mu$ on $\T$ let $P_\mu$ denote the Poisson integral of $\mu$ given by
  $$P_{\mu}(z)=  \int_{\mathbb T}\frac {1-|z|^2}{|\zeta-z |^2}\,d\mu(\zeta), \quad z\in \D.$$
The associated \textit{harmonically weighted Dirichlet space} $\mathcal D_{\mu}$ consists of functions $f$ analytic in $\D$ for which
  \begin{equation}
    \mathcal D_{\mu}(f)= \int_{\mathbb D}|f'(z)|^2P_\mu(z)\,dA(z)<\infty\label{Dirichlet},
  \end{equation}
where $A$ denotes the normalized area measure on $\D$.

Spaces $\mathcal D(\mu)$ were introduced by S. Richter in \cite{Richter},  where it was proved that certain two-isometries on a Hilbert space can be represented as multiplication by $z$ on a space $\mathcal D(\mu)$.
The results on $\mathcal D(\mu)$ stated below were also obtained in \cite{Richter}.

If $\mu$ is a finite measure on $\T$ such that $\mu(\T)>0$, then $\mathcal D(\mu) \subset H^2$ and $\mathcal D(\mu) $ is a Hilbert space with the norm $\|\cdot\|_{\mathcal D(\mu)}$ given by
  \begin{equation*}
  \label{norm2}
    \|f\|_{\mathcal D(\mu)}^2=\|f\|_2^2+ \mathcal D_{\mu}(f).
  \end{equation*}
If $\mu=\delta_{\lambda}$ is the Dirac measure at $\lambda\in\mathbb T$, then
  $$\mathcal D_{\delta_\lambda}(f)=\mathcal D_{\lambda}(f)= \int_{\mathbb D}|f'(z)|^2\frac{1-|z|^2}{|z-\lambda|^2}\,dA(z)$$
and $\mathcal D_{\lambda}(f)$ is called the \textit{local Dirichlet integral} of $f$ at $\lambda$.

For $f\in H^2$, by Fubini's theorem, $\mathcal D_{\mu}(f)$ given by (\ref{Dirichlet}) can be expressed as
  $$\mathcal D_{\mu}(f)= \int_{\mathbb T}\mathcal D_{\lambda} (f)\,d\mu(\lambda).$$
Moreover, if $\lambda\in\T$ is such that $\mathcal D_{\lambda} (f)<\infty$, then the nontangential limit $f(\lambda)$ exists.

The following local Douglas formula for $\mathcal D_{\lambda} (f)$ was proved by Richter and Sundberg \cite{RichSun}: if $f\in H^2$ and $f(\lambda)$ exists, then
  \begin{equation*}
    \mathcal  D_{\lambda}(f)=\frac1{2\pi}\int_{\mathbb T}\left|\frac{f(z)-f(\lambda)}{z-\lambda}\right|^2|dz|.\label{norm3}
  \end{equation*}
If $f(\lambda)$ does not exist, then we set $\mathcal D_{\lambda}(f)=\infty$.
The  space   $\mathcal D(\delta_\lambda)=\mathcal D_{\lambda}$ is called the \textit{local Dirichlet space at} $\lambda$. It has also  been  proved in \cite{RichSun} that
  $$\mathcal D_{\lambda}= \{f\in\text{Hol}(\mathbb D): f= c+ (z-\lambda)g: c\in\mathbb C, \  g\in H^2\}.$$

In \cite{LuoGuRich} S. Luo, C. Gu and  S. Richter characterized nonextreme $b$ for which $Y$ is a strict $2m$-isometry, $m\in\mathbb N$. 
% (see \cite{LuoGuRich} for definition).
It turns out that the corresponding spaces $\mathcal{H}(b)$ are equal to the so called \textit{local Dirichlet spaces of order $m$}, $\mathcal D^m_{\lambda}$, $\lambda\in\mathbb T$, defined as follows:
  \begin{equation}
  \label{star}
	\mathcal D^m_{\lambda} =\{f\in\text{Hol}(\mathbb D): f= p+(z-\lambda)^mg, \ p \ \text{is a polynomial of degree} <m, \ g\in H^2\}.
  \end{equation}
They also showed that if $f\in H^2$, then $f\in \mathcal D_{\lambda}^m$ if and only if for each $j=0,1,\dots,m-1 $ the function $f^{(j)}$ has a nontangential limit at $\lambda$ and
  $$\mathcal D^m_{\lambda} (f)=\frac 1{2\pi}\int_{\mathbb T}\left|\frac{f(z)-T_{m-1}(f,\lambda)(z)}{(z-\lambda)^m}\right|^2|dz|<\infty,$$
where
  $$T_{m-1}(f,\lambda)(z)=f(\lambda)+f'(\lambda)(z-\lambda)+\dots+\frac{f^{(m-1)}(\lambda)}{(m-1)!}(z-\lambda)^{m-1}.$$
In the space $ \mathcal D^m_{\lambda}$ the norm is given by
  \begin{equation}
  \label{Dm}
	\|f\|^2_{\mathcal D_{\lambda}^m}=\|f\|^2_2+  \mathcal D_{{\lambda}}^m(f).
  \end{equation}
In the next section  we obtain the following characterization of the space  $\mathcal D^m_{\lambda}$.

\begin{theorem}
A function $f\in H^2$ is in the space $ \mathcal D^m_{\lambda}$ if and only if
  $$\int_{\mathbb D}\left|\frac{f^{(m)}(z)}{(z-\lambda)^m} \right|^2(1-|z|^2)^{2m-1} dA(z)<\infty.$$
\end{theorem}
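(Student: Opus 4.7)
The plan is to use the characterization \eqref{star}: $f\in \mathcal D^m_\lambda$ if and only if $f = T_{m-1}(f,\lambda)+(z-\lambda)^m h$ for some $h\in H^2$. Since $T_{m-1}(f,\lambda)$ is a polynomial of degree less than $m$, its $m$-th derivative vanishes, and Leibniz's rule gives
\[
\frac{f^{(m)}(z)}{(z-\lambda)^m} \;=\; h^{(m)}(z) + \sum_{j=0}^{m-1}\binom{m}{j}\frac{m!}{j!}\,\frac{h^{(j)}(z)}{(z-\lambda)^{m-j}}.
\]
The theorem therefore reduces to proving, for $h$ analytic on $\mathbb D$, the equivalence between $h\in H^2$ and the right-hand side belonging to $L^{2}\bigl((1-|z|^{2})^{2m-1}\,dA\bigr)$.

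For the forward direction ($h\in H^2$ implies the integral is finite), the leading term $h^{(m)}$ has finite weighted $L^2$ norm by the standard Littlewood--Paley characterization of $H^2$. For $1\le j\le m-1$, combining the inequality $|z-\lambda|\ge 1-|z|$, valid on $\mathbb D$, with Littlewood--Paley handles each of the middle summands. The genuine obstacle is the $j=0$ term $m!\,h(z)/(z-\lambda)^m$, which for a typical $h\in H^2$ is \emph{not} square-integrable against the weight, so finiteness of the whole expression must come from cancellation. To capture this, we return to the closed form provided by Cauchy's formula,
\[
\bigl((z-\lambda)^m h(z)\bigr)^{(m)} \;=\; \frac{m!}{2\pi i}\int_{\mathbb T}\frac{(\zeta-\lambda)^m h(\zeta)}{(\zeta-z)^{m+1}}\,d\zeta,
\]
divide by $(z-\lambda)^m$, and apply Fubini to reduce the estimate to a direct bound on the resulting kernel, producing the inequality $\int_{\mathbb D}|f^{(m)}(z)/(z-\lambda)^m|^2(1-|z|^2)^{2m-1}\,dA(z)\lesssim \|h\|_{2}^{2}$.

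For the converse, suppose $f\in H^2$ and the integral is finite. The first step is to show that $f^{(j)}$ has a nontangential limit at $\lambda$ for every $j<m$, which makes the Taylor polynomial $T_{m-1}(f,\lambda)$ well defined; the function $h(z) = (f(z)-T_{m-1}(f,\lambda)(z))/(z-\lambda)^m$ is then analytic on $\mathbb D$. Inverting the Leibniz identity and invoking Littlewood--Paley once more forces $\int_{\mathbb D}|h^{(m)}|^2(1-|z|^2)^{2m-1}\,dA<\infty$, and hence $h\in H^2$, so $f\in \mathcal D^m_\lambda$ by \eqref{star}. The principal technical difficulties are the cancellation of the singular Leibniz summands at $z=\lambda$ in the forward direction, and the extraction of the polynomial $T_{m-1}(f,\lambda)$ from the integrability hypothesis in the converse.
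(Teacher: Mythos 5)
Your route is genuinely different from the paper's: the paper proves this with no hard-analysis estimates at all, by computing (Lemma 1) that the reproducing kernel of the space $\{f^{(m)}:f\in\mathcal D_1^m\}$ is $(2m)!\,(\bar w-1)^m(z-1)^m(1-\bar wz)^{-2m-1}$ and observing that this is, up to a constant, the kernel of $(z-1)^mA^2\bigl((1-|z|^2)^{2m-1}dA\bigr)$, whence the two sets coincide. Your plan --- write $f=T_{m-1}(f,\lambda)+(z-\lambda)^mh$, expand $f^{(m)}/(z-\lambda)^m$ by Leibniz, and argue term by term with Littlewood--Paley --- could probably be made to work and would be more elementary, but as written both directions have genuine gaps.

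In the forward direction the $j=0$ term is dispatched by an assertion rather than an argument: ``return to Cauchy's formula, apply Fubini, and bound the resulting kernel'' is a plan, and the inequality $\int_{\mathbb D}|h(z)|^2|z-\lambda|^{-2m}(1-|z|^2)^{2m-1}\,dA(z)\lesssim\|h\|_2^2$ is precisely what still has to be proved. (Your diagnosis is also off: no cancellation is needed, because $|z-\lambda|^{-2m}(1-|z|^2)^{2m-1}dA$ is a Carleson measure --- on a Carleson box at $\lambda$ of side $\delta$ the dyadic shells $|z-\lambda|\sim s$ each contribute $O(s)$, summing to $O(\delta)$ --- so the $j=0$ term is finite for \emph{every} $h\in H^2$; proving this, or your kernel bound, is the missing content.) In the converse there are two gaps. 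First, the existence of the nontangential limits of $f^{(j)}$ at $\lambda$ for $j<m$ is exactly the paper's Corollary 1; it is a nontrivial fact (the paper obtains it either as a consequence of Theorem 1 or from a separate growth estimate in the weighted Bergman space) and cannot be taken as a free ``first step''. Second, ``inverting the Leibniz identity and invoking Littlewood--Paley'' to conclude $\int_{\mathbb D}|h^{(m)}|^2(1-|z|^2)^{2m-1}dA<\infty$ is circular: to control the subtracted terms $h^{(j)}(z)/(z-\lambda)^{m-j}$ in the weighted norm you would use $|z-\lambda|\ge 1-|z|$ together with $\int_{\mathbb D}|h^{(j)}|^2(1-|z|^2)^{2j-1}dA\lesssim\|h\|_2^2$, which presupposes $h\in H^2$ --- the very conclusion you are after (and for $j=0$ even that crude pointwise bound fails). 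Some bootstrapping, an induction on $m$, or the Carleson-measure lemma used in reverse would be needed to close the loop.
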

\vspace{.2in}

In \cite{Sarason2} D. Sarason proved that if for $\lambda\in\mathbb{T}$,

  \begin{equation}
  \label{eqnorm}
    b_{\lambda}(z)=\frac{(1-\alpha)\overline{\lambda} z}{1-\alpha\overline{\lambda} z},\quad 
    \alpha =\frac{3-\sqrt{5}}{2}, \quad 
    \left( a_{\lambda}(z)=\frac{(1-\alpha)(1-\overline{\lambda} z)} {1-\alpha\overline{\lambda} z}\right),
  \end{equation}
then the space $\mathcal{D}_\lambda$ coincides with
$\mathcal{H}(b_\lambda)$ with equality of norms. It follows from the paper \cite{ransford} that $\H(b_\lambda)=\mathcal{D}_\lambda$ with  equality of norms only for $b_\lambda$ given by \eqref{eqnorm}.
\vspace{.2in}

In \cite{LuoGuRich} the authors also  described a relation between local Dirichlet space of higher order  $\mathcal D^m_\lambda$ and $\H(b)$. In particular, they obtained a necessary and sufficient condition for $\H(b)=\mathcal D^m_\lambda $ with equality of norms.
In Section 3, we derive explicit formulas for such functions $b$ analogous to \eqref{eqnorm}.

%In Section 4 we discuss some properties of $\mathcal{H}(b)$ generated by a nonextreme rational $b$.

%It is known that the backward shift operator $T_{\bar z}$ preserves the space $\mathcal{H}(b)$ and $T_{\bar z|\mathcal{H}(b)}$ is a contraction on $\mathcal{H}(b)$

A nonzero vector in a Hilbert space is called a wandering vector of a given operator if it is orthogonal to its orbit under the positive powers of the operator. In \cite{Sarason3} the author described the wandering vector of the shift operator $S_{\mu}=S_{|\mathcal D(\mu)}$ on the harmonically weighted Dirichlet space $ \mathcal D(\mu)$  associated with  a finitely atomic measure  $\mu=\sum_{i=1}^n\mu_i\delta_{\lambda_i}$ where $\lambda_1,\dots,\lambda_n$, are distinct points on $\T$ and $\mu_1,\dots,\mu_n$ are positive numbers.
One of his results states that the outer part of the wandering vector of  $S_{\mu}$ lies in the model space generated by a certain finite Blaschke product.
In the last section we consider the spaces $\H(b)$ when $b$ is  nonextreme  and rational,  and show that in this case the outer part of a wandering vector of
the operator $Y=S_{|\H(b)}$  has similar property.

\section{Local Dirichlet space of order $\mathbf m$}

In the proof of  Theorem 1 we will need the following technical lemma.

\begin{lemma}
\label{lem_aux_Dir_2m}
For a positive integer $m$ and $z,w\in\mathbb{D}$,
\begin{align}
\label{eq_aux_Dir_2m}
  \frac{\partial^{2m}}{\partial \overline{w}^m\partial z^m}\left(\frac{(\overline{w}-1)^m(z-1)^m}{1-\overline{w}z}\right)
  =(2m)!\frac{(\overline{w}-1)^m(z-1)^m}{(1-\overline{w}z)^{2m+1}}.
\end{align}
\end{lemma}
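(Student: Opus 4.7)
The plan is to iterate the derivatives: first compute $\partial_z^m$, then $\partial_{\overline{w}}^m$.

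For the $z$-derivative, pull $(\overline{w}-1)^m$ outside (it is constant in $z$). The remaining $g(z)=(z-1)^m/(1-\overline{w}z)$ is a rational function of numerator degree $m$ and denominator degree $1$; polynomial division in $z$ yields $g(z)=q(z)+r/(1-\overline{w}z)$ with $\deg q\le m-1$ and, evaluating at $z=1/\overline{w}$, $r=(1-\overline{w})^m/\overline{w}^m$. Since $\partial_z^m q\equiv 0$ while $\partial_z^m(1-\overline{w}z)^{-1}=m!\,\overline{w}^m/(1-\overline{w}z)^{m+1}$, I would obtain
$$\partial_z^m\frac{(\overline{w}-1)^m(z-1)^m}{1-\overline{w}z}=\frac{(-1)^m\,m!\,(\overline{w}-1)^{2m}}{(1-\overline{w}z)^{m+1}}.$$

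Next, I would apply $\partial_{\overline{w}}^m$ by the Leibniz rule. Using
$$\partial_{\overline{w}}^k(\overline{w}-1)^{2m}=\frac{(2m)!}{(2m-k)!}(\overline{w}-1)^{2m-k}\quad\text{and}\quad \partial_{\overline{w}}^{m-k}(1-\overline{w}z)^{-(m+1)}=\frac{(2m-k)!}{m!}\,z^{m-k}(1-\overline{w}z)^{-(2m-k+1)},$$
the factors $(2m-k)!$ cancel cleanly. Reindexing $j=m-k$ and factoring out $(\overline{w}-1)^m(1-\overline{w}z)^{-(m+1)}$ leaves the sum $\sum_{j=0}^m\binom{m}{j}x^j=(1+x)^m$ with $x=(\overline{w}-1)z/(1-\overline{w}z)$. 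The key simplification is
$$1+\frac{(\overline{w}-1)z}{1-\overline{w}z}=\frac{1-z}{1-\overline{w}z},$$
so the sum equals $(1-z)^m/(1-\overline{w}z)^m$, and the denominator assembles into $(1-\overline{w}z)^{2m+1}$. Accounting for the two factors of $(-1)^m$ (from $(\overline{w}-1)^m=(-1)^m(1-\overline{w})^m$ in step one, and from $(1-z)^m=(-1)^m(z-1)^m$ at the end) produces the identity as stated.

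The main obstacle is essentially bookkeeping: keeping track of signs between $(\overline{w}-1)$ and $(1-\overline{w})$, and spotting the cancellation of $(2m-k)!$ in the Leibniz expansion. That cancellation is precisely what forces the disparate denominator exponents $(1-\overline{w}z)^{-(2m-k+1)}$ to collapse, via the binomial identity displayed above, to the single exponent $2m+1$ predicted by the claim.
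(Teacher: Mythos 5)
Your computation is correct, but it follows a genuinely different route from the paper's. The paper proves the identity by induction on $m$: after checking $m=1$, it applies a bivariate Leibniz formula to the product $\frac{(\overline{w}-1)^m(z-1)^m}{1-\overline{w}z}\cdot(\overline{w}-1)(z-1)$, invokes the induction hypothesis on the four surviving terms, and then verifies by a fairly long explicit expansion that these four rational expressions recombine into $(2m+2)!\,\frac{(\overline{w}-1)^{m+1}(z-1)^{m+1}}{(1-\overline{w}z)^{2m+3}}$. You instead compute directly: polynomial division shows that only the polar part $r/(1-\overline{w}z)$ of $(z-1)^m/(1-\overline{w}z)$ survives $m$-fold differentiation in $z$ (the quotient has degree $m-1$), which yields $(-1)^m m!\,(\overline{w}-1)^{2m}(1-\overline{w}z)^{-(m+1)}$ after one clean step; then the one-variable Leibniz rule in $\overline{w}$ produces a sum in which the factorials $(2m-k)!$ cancel and the binomial theorem, via $1+\frac{(\overline{w}-1)z}{1-\overline{w}z}=\frac{1-z}{1-\overline{w}z}$, collapses everything to the stated form. (I checked the constants: the prefactor assembles as $(-1)^m m!\cdot\frac{(2m)!}{m!}\cdot(-1)^m=(2m)!$, and the signs from $(\overline{w}-1)^m$ versus $(1-\overline{w})^{2m}$ and from $(1-z)^m$ work out as you claim.) Your argument is shorter, avoids induction, and makes visible where the constant $(2m)!$ comes from; the paper's inductive proof is more mechanical but requires no structural insight beyond the Leibniz formula, at the cost of a heavier verification. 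The only cosmetic point in your write-up is that the evaluation of the remainder at $z=1/\overline{w}$ tacitly assumes $\overline{w}\neq 0$; since both sides of your intermediate formula are holomorphic in $\overline{w}$, the case $\overline{w}=0$ follows by continuity, so nothing is lost.
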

\begin{proof}
We proceed by induction. For $m=1$ the equality is obvious.

Assume \eqref{eq_aux_Dir_2m} holds true for an $m$. We will use the following Leibniz formula
\begin{align*}
  \frac{\partial^{2m}}{\partial \overline{w}^m\partial z^m} (f(\overline{w},z)\cdot g(\overline{w},z))
  =\sum_{\substack{0\leq k\leq m\\ 0\leq l\leq m}}\binom{m}{k}\binom{m}{l} \frac{\partial^{2m-k-l}}{\partial \overline{w}^{m-k}\partial z^{m-l}} f(\overline{w},z) \frac{\partial^{k+l}}{\partial \overline{w}^k\partial z^l} g(\overline{w},z).
\end{align*}
Hence
\begin{align*}
  &\frac{\partial^{2m+2}}{\partial \overline{w}^{m+1}\partial z^{m+1}}\left(\frac{(\overline{w}-1)^{m+1}(z-1)^{m+1}}{1-\overline{w}z}\right)\\
  &=\frac{\partial^{2}}{\partial \overline{w}\partial z}\left[\frac{\partial^{2m}}{\partial \overline{w}^{m}\partial z^{m}}\left(\frac{(\overline{w}-1)^{m}(z-1)^{m}}{1-\overline{w}z} \cdot (\overline{w}-1)(z-1)\right)\right]\\
  &=\frac{\partial^{2}}{\partial \overline{w}\partial z}\left[\sum_{\substack{0\leq k\leq m\\ 0\leq l\leq m}}\binom{m}{k}\binom{m}{l} \frac{\partial^{2m-k-l}}{\partial \overline{w}^{m-k}\partial z^{m-l}} \left(\frac{(\overline{w}-1)^{m}(z-1)^{m}}{1-\overline{w}z}\right) \frac{\partial^{k+l}}{\partial \overline{w}^k\partial z^l} \left((\overline{w}-1)(z-1)\right)\right].
\end{align*}
Since for $k\geq 2$ the derivatives  $\frac{\partial^{k}}{\partial \overline{w}^k} (\overline{w}-1)=\frac{\partial^{k}}{\partial z^k} (z-1)$ vanish,
\begin{align*}
 \frac{\partial^{2m+2}}{\partial \overline{w}^{m+1}\partial z^{m+1}}&\left(\frac{(\overline{w}-1)^{m+1}(z-1)^{m+1}}{1-\overline{w}z}\right)\\
  &=\frac{\partial^{2}}{\partial \overline{w}\partial z}
  \left[(\overline{w}-1)(z-1)\frac{\partial^{2m}}{\partial \overline{w}^{m}\partial z^{m}} \left(\frac{(\overline{w}-1)^{m}(z-1)^{m}}{1-\overline{w}z}\right) \right.\\
  &+m(z-1)\frac{\partial^{2m-1}}{\partial \overline{w}^{m-1}\partial z^{m}} \left(\frac{(\overline{w}-1)^{m}(z-1)^{m}}{1-\overline{w}z}\right)\\
  &+m (\overline{w}-1)\frac{\partial^{2m-1}}{\partial \overline{w}^{m}\partial z^{m-1}} \left(\frac{(\overline{w}-1)^{m}(z-1)^{m}}{1-\overline{w}z}\right)\\
  &\left.+m^2\frac{\partial^{2m-2}}{\partial \overline{w}^{m-1}\partial z^{m-1}} \left(\frac{(\overline{w}-1)^{m}(z-1)^{m}}{1-\overline{w}z}\right) \right].
\end{align*}
By the induction hypothesis,
\begin{align*}
  \frac{\partial^{2m+2}}{\partial \overline{w}^{m+1}\partial z^{m+1}} &\left(\frac{(\overline{w}-1)^{m+1}(z-1)^{m+1}}{1-\overline{w}z}\right)\\
 % &=(2m)!\left\{\frac{\partial^{2}}{\partial \overline{w}\partial z} \left[(\overline{w}-1)(z-1) \frac{(\overline{w}-1)^m(z-1)^m}{(1-\overline{w}z)^{2m+1}} \right] +m^2\frac{(\overline{w}-1)^m(z-1)^m}{(1-\overline{w}z)^{2m+1}}\right.\\
  % &\left.+\frac{\partial}{\partial z} \left[m(z-1) \frac{(\overline{w}-1)^m(z-1)^m}{(1-\overline{w}z)^{2m+1}} \right] +\frac{\partial}{\partial \overline{w}} \left[m (\overline{w}-1)\frac{(\overline{w}-1)^m(z-1)^m}{(1-\overline{w}z)^{2m+1}} \right]\right\}\\
  &=(2m)!\left\{\frac{\partial^{2}}{\partial \overline{w}\partial z} \left(\frac{(\overline{w}-1)^{m+1}(z-1)^{m+1}}{(1-\overline{w}z)^{2m+1}} \right) +m^2\frac{(\overline{w}-1)^m(z-1)^m}{(1-\overline{w}z)^{2m+1}}\right.\\
  &\left.+m\frac{\partial}{\partial z} \left(\frac{(\overline{w}-1)^m(z-1)^{m+1}}{(1-\overline{w}z)^{2m+1}} \right) +m\frac{\partial}{\partial \overline{w}} \left(\frac{(\overline{w}-1)^{m+1}(z-1)^m}{(1-\overline{w}z)^{2m+1}} \right)\right\}.
\end{align*}
A calculation shows that
  \begin{align*}
    \frac{\partial^{2}}{\partial \overline{w}\partial z} \left(\frac{(\overline{w}-1)^{m+1}(z-1)^{m+1}}{(1-\overline{w}z)^{2m+1}} \right)
    &=\frac{(\overline{w}-1)^{m}(z-1)^{m}}{(1-\overline{w}z)^{2m+3}}
    \left[2+4m+m^2-\overline{w}(2+5m+2m^2)\right.\\
    &\left.-z(2+5m+2m^2)+\overline{w}z(2+8m+6m^2) -\overline{w}^2z(m+2m^2)\right.\\
    &\left.-\overline{w}z^2(m+2m^2)+m^2\overline{w}^2z^2 \right],\\
    \\
    m\frac{\partial}{\partial z} \left(\frac{(\overline{w}-1)^m(z-1)^{m+1}}{(1-\overline{w}z)^{2m+1}} \right)
   % &=\frac{(\overline{w}-1)^{m}(z-1)^{m}}{(1-\overline{w}z)^{2m+2}}
   % \left[m+m^2-\overline{w}(m+2m^2)+m^2\overline{w}z\right]\\
    &=\frac{(\overline{w}-1)^{m}(z-1)^{m}}{(1-\overline{w}z)^{2m+3}}
    \left[m+m^2-\overline{w}(m+2m^2)-m\overline{w}z\right.\\
    &\left.+\overline{w}^2z(m+2m^2)-m^2\overline{w}^2z^2\right],\\
    \\
    m\frac{\partial}{\partial \overline{w}} \left(\frac{(\overline{w}-1)^{m+1}(z-1)^m}{(1-\overline{w}z)^{2m+1}} \right)
    % &=\frac{(\overline{w}-1)^{m}(z-1)^{m}}{(1-\overline{w}z)^{2m+2}}
   % \left[m+m^2-z(m+2m^2)+m^2\overline{w}z\right]\\
    &=\frac{(\overline{w}-1)^{m}(z-1)^{m}}{(1-\overline{w}z)^{2m+3}}
    \left[m+m^2-z(m+2m^2)-m\overline{w}z\right.\\
    &\left.+\overline{w}z^2(m+2m^2)-m^2\overline{w}^2z^2\right]
      \end{align*}
and%\text{and}& \\
\begin{align*}
    m^2\frac{(\overline{w}-1)^m(z-1)^m}{(1-\overline{w}z)^{2m+1}}
    &=\frac{(\overline{w}-1)^{m}(z-1)^{m}}{(1-\overline{w}z)^{2m+3}}
    \left[m^2-2m^2\overline{w}z+m^2\overline{w}^2z^2\right].
  \end{align*}
Thus
\begin{align*}
  \frac{\partial^{2m+2}}{\partial \overline{w}^{m+1}\partial z^{m+1}} &\left(\frac{(\overline{w}-1)^{m+1}(z-1)^{m+1}}{1-\overline{w}z}\right)\\
  &=(2m)!(2+6m+4m^2)\frac{(\overline{w}-1)^{m}(z-1)^{m}} {(1-\overline{w}z)^{2m+3}}\left[1-\overline{w}-z +\overline{w}z\right]\\
  &=(2m+2)!\frac{(\overline{w}-1)^{m+1}(z-1)^{m+1}} {(1-\overline{w}z)^{2m+3}}.
\end{align*}
\end{proof}

\begin{proof} [Proof of Theorem 1]
We will use Sarason's idea applied in the proof of Proposition 1 in \cite{Sarason2}. Without loss of generality we may assume that $\lambda=1$.
Let $A_m= T_{(z-1)^m}$  be the Toeplitz operator with the symbol $(z-1)^m$ on $H^2$. Let $\mathcal M(A_m)$ be the range of $A_m$ equipped with the Hilbert structure that makes $A_m$ a coisometry from $H^2$ onto $\mathcal M(A_m)$.  Since the kernel function for the space $H^2$ is $(1-\bar wz)^{-1}$, for $g\in  \mathcal M(A_m)$ we get
$$
g(w)=\left\langle g,\frac {(z-1)^m(\bar w -1)^m}{1-\bar w z}\right\rangle_{\mathcal M(A_m)},\quad w\in\mathbb D,
$$ where the last inner product is in the space $\mathcal M(A_m)$.
This implies that
\begin{align*}
  g^{(m)}(w)&=\left\langle g,\frac{\partial^m}{\partial \bar w^m}\frac {(z-1)^m(\bar w -1)^m}{1-\bar w z}\right\rangle_{\mathcal M(A_m)}\\[4pt]
  &=\left\langle  g^{(m)},\frac{\partial^{2m}}{\partial z^m\partial \bar w^m}\frac {(z-1)^m(\bar w -1)^m}{1-\bar w z}\right\rangle_{\mathcal M^{(m)}(A_m)},
\end{align*}
where the last inner product is taken in the range space $\mathcal M^{(m)}(A_m)$ of the operator of differentiation of order $m$.
By Lemma \ref{lem_aux_Dir_2m},
  $$ \frac{\partial^{2m}}{\partial z^m\partial \bar w^m}\frac {(z-1)^m(\bar w -1)^m}{1-\bar w z}= (2m)!\frac {(z-1)^m(\bar w -1)^m}{(1-\bar w z)^{2m+1}}.$$
Observe that by \eqref{star},
  $$ \mathcal M^{(m)}(A_m)=\{f^{(m)}:f\in\mathcal D^m_1\}.$$

We now note that the reproducing kernel for the space
  $$ A^2(\rho^m)=\left\{ h\in\text {Hol}(\mathbb D): \int_{\mathbb D}|h(z)|^2(1-|z|^2)^{2m-1}dA(z) <\infty\right\} $$
equals
  $$\frac{2m}{(1-\bar w z)^{2m+1}}.$$
Thus the space
$\{(z-1)^mh: h\in A^2(\rho^m)\}$ has the same (up to a constant) kernel as the space $\{f^{(m)}: f\in \mathcal D^m_1\}$. This means that $f\in\mathcal D^m_1$ if and only if $h=f^{(m)}/(z-1)^m\in A^2(\rho^m)$.
\end{proof}

\begin{corollary}
If a function $f$ holomorphic in $\D$ is such that for a $\lambda\in\mathbb T$,
  \begin{equation}
  \label{m}
  	\int_{\mathbb{D}}\left|\frac{f^{(m)}(z)}{(z-\lambda)^m}\right|^2(1-|z|^2)^{2m-1} dA(z)<\infty,
  \end{equation}
then for each $j=0,1,\dots,m-1 $ the function $f^{(j)}$ has a nontangential limit at $\lambda$.
\end{corollary}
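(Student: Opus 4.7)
The plan is to reduce the corollary to Theorem 1 by decomposing $f$ into a polynomial part and a part lying in $\mathcal{D}^m_\lambda$. After rotating so that $\lambda = 1$, I would set $h(z) := f^{(m)}(z)/(z-1)^m$; by hypothesis \eqref{m}, $h \in A^2(\rho^m)$.

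Next I would invoke the reproducing-kernel identification carried out inside the proof of Theorem 1, which shows that
$$\{\tilde f^{(m)} : \tilde f \in \mathcal{D}^m_1\} \;=\; \{(z-1)^m h : h \in A^2(\rho^m)\}$$
as sets (the two Hilbert spaces have proportional reproducing kernels, hence identical underlying function classes). This delivers a function $\tilde f \in \mathcal{D}^m_1$ with $\tilde f^{(m)} = (z-1)^m h = f^{(m)}$. Setting $p := f - \tilde f$, one has $p^{(m)} \equiv 0$, so $p$ is a polynomial of degree less than $m$, and $f = \tilde f + p$. By the characterization of $\mathcal{D}^m_\lambda$ recalled just after \eqref{star}, each of the derivatives $\tilde f^{(j)}$, $j = 0, 1, \dots, m-1$, has a nontangential limit at $1$; the derivatives $p^{(j)}$ trivially have limits there. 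Adding these, $f^{(j)}$ has a nontangential limit at $1$ for $j = 0, 1, \dots, m-1$, which is the conclusion.

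The only nontrivial ingredient is the surjectivity of the map $\tilde f \mapsto \tilde f^{(m)}$ onto $(z-1)^m A^2(\rho^m)$, needed to produce $\tilde f$ when $f$ is merely holomorphic in $\mathbb{D}$ and a priori outside $H^2$. I expect this to be the main point of care in writing, but it requires no new estimate: it is precisely the kernel-equality argument already executed in the proof of Theorem 1, which gives the two spaces not only the same norm up to a constant but the same set of functions.
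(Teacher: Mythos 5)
Your proposal is correct and follows essentially the same route as the paper: both use the kernel-equality (equivalently, the surjectivity of the map onto $A^2(\rho^m)$) established inside the proof of Theorem 1 to produce a decomposition $f=(z-\lambda)^mg+p$ with $g\in H^2$ and $\deg p<m$, i.e.\ $f\in\mathcal D^m_\lambda$, and then invoke the known boundary behaviour of functions in $\mathcal D^m_\lambda$ from \cite{LuoGuRich}. The only (immaterial) difference is that the paper phrases the surjectivity via the isometry $g\mapsto\frac{1}{(2m-1)!}((z-1)^mg)^{(m)}/(z-1)^m$ of $H^2$ onto $A^2(\rho^m)$ and cites the reasoning of Lemma 9.1 of \cite{LuoGuRich} for the final step, whereas you cite the characterization of $\mathcal D^m_\lambda$ recalled after \eqref{star}.
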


\begin{proof} 
We can clearly assume that $\lambda=1$. It follows from the proof of Theorem 1 that  operator $T^m$ given by
  $$ T^m(g)=\frac 1{(2m-1)!}\frac {((z-1)^m g)^{(m)}}{(z-1)^m}$$ 
is an isometry  of $H^2$ onto $ A^2(\rho^m)$.  If $f$ satisfies condition \eqref{m}, then there exists $g\in H^2$ such that $f^{(m)}= ((z-1)^mg)^{(m)}$.
This means that $f(z) =(z-1)^mg+p$, where $p$ is a polynomial of degree $<m$, and the reasoning used in the proof of Lemma 9.1 in \cite{LuoGuRich} proves the claim.
\end{proof}

\section{De Branges-Rovnyak spaces $\mathcal{H}(b)$ and local Dirichlet spaces of finite order}

\vspace{.2in}

We first cite one of the main results contained in \cite{LuoGuRich} using notation from the Introduction. Recall that a Hilbert space operator $T$ is an $m$-isometry if
  $$\sum_{k=0}^m(-1)^{m-k}\binom{m}{k}T^{*k}T^k=0.$$
It is easy to check that every $m$-isometry is a $k$-isometry for every $k\geq m$. An $m$-isometry that is not an $(m-1)$-isometry is called a strict $m$-isometry.

\begin{ttheorem}[\!\!\cite{LuoGuRich}]
Let $b$ be a non--extreme point of the unit ball of $H^\infty$ with $b(0)=0$, and let $m\in\mathbb{N}$. Then $Y$ is not a strict $(2m+1)$-isometry, and the following are equivalent
\begin{enumerate}
  \item[{(i)}] $Y$ is a strict $2m$-isometry,
  \item[{(ii)}] $(b,a)$ is a rational pair such that $a$ has a single zero of multiplicity $m$ at a point $\lambda\in\T$,
  \item[{(iii)}] there is a $\lambda\in\T$ and a polynomial $\widetilde{p}$ of degree $<m$ with $\widetilde{p}(\lambda)\ne0$ such that
  \begin{equation*}
  \label{orderm}
	\Vert f\Vert^2_b=\Vert f\Vert^2_2+\mathcal{D}^m_\lambda(\widetilde{p}f).
  \end{equation*}
\end{enumerate}
If the three conditions hold, then there are polynomials $p$ and $r$ of degree $\leq m$ such that $b=\frac{p}{r}$, $a=\frac{(z-\lambda)^m}{r}$, and $\widetilde{p}(z)=z^m\overline{p\left(\frac{1}{\bar{z}}\right)}$ for $z\in\mathbb{D}$, and  $|r(z)|^2=|p(z)|^2+|z-\lambda|^{2m}$ for all $z\in\T$. Furthermore, $\H(b)=\mathcal{D}_\lambda^m$ with equivalence of norms.
\end{ttheorem}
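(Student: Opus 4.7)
The plan is to prove the cycle (ii)$\Rightarrow$(iii)$\Rightarrow$(i)$\Rightarrow$(ii), along with the parity claim that $Y$ is never a strict $(2m+1)$-isometry, extracting the explicit structural formulas along the way. The central tool is the norm identity $\|f\|_b^2=\|f\|_2^2+\|T_{\bar\varphi}f\|_2^2$ available for nonextreme $(b,a)$ with $\varphi=b/a\in\mathcal{N}^+$, which reduces the $m$-isometry condition on $Y$ to a condition on $T_{\bar\varphi}$ and the orbit $\{S^kf\}$.

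Starting from (ii), I would write $b=p/r$ and $a=q/r$ in lowest terms with $\deg p,\deg r\le m$. The single-zero hypothesis forces $q(z)=c(z-\lambda)^m$; absorbing $c$ into $r$ via $a(0)>0$ yields $a=(z-\lambda)^m/r$, and clearing denominators in $|a|^2+|b|^2=1$ produces $|r|^2=|p|^2+|z-\lambda|^{2m}$ on $\T$. Hence $\varphi=p(z)/(z-\lambda)^m$, and applying $\bar z=1/z$ on $\T$ to both numerator and denominator gives
\[
\bar\varphi(z)=\zeta\,\frac{\tilde p(z)}{(z-\lambda)^m}\quad\text{on }\T,
\]
for a unimodular $\zeta$, where $\tilde p(z)=z^m\overline{p(1/\bar z)}$; the assumption $b(0)=0$ forces $p(0)=0$, so $\deg\tilde p<m$, while coprimality of $p$ and $(z-\lambda)^m$ yields $\tilde p(\lambda)\ne 0$. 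To deduce (iii), set $g=\tilde pf$: the Riesz projection annihilates the purely negative-frequency series arising from $T_{m-1}(g,\lambda)(z)/(z-\lambda)^m$, leaving
\[
T_{\bar\varphi}f=\zeta\,\frac{g-T_{m-1}(g,\lambda)}{(z-\lambda)^m},
\]
whose squared $L^2(\T)$-norm is exactly $\mathcal{D}_\lambda^m(g)=\mathcal{D}_\lambda^m(\tilde pf)$; substitution into the norm identity yields (iii).

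For (iii)$\Rightarrow$(i) and the parity claim, iterating the commutation $P(zh)=zP(h)+\hat h(-1)$ gives the orthogonal decomposition
\[
T_{\bar\varphi}S^kf=z^kT_{\bar\varphi}f+\sum_{j=1}^k\widehat{\bar\varphi f}(-j)\,z^{k-j},
\]
so $\|Y^kf\|_b^2-\|f\|_b^2=\sum_{j=1}^k|\widehat{\bar\varphi f}(-j)|^2$ on a dense subspace. Under (iii), $\bar\varphi$ has a single pole of order $m$ at $\lambda$, so the Fourier coefficients $\widehat{\bar\varphi f}(-j)$ grow like $j^{m-1}$ and $|\widehat{\bar\varphi f}(-j)|^2$ is asymptotically polynomial in $j$ of exact degree $2m-2$; by Faulhaber's identity the partial sum is polynomial in $k$ of exact degree $2m-1$, so the $(2m)$-th forward difference vanishes while the $(2m-1)$-th does not. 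Since $|\widehat{\bar\varphi f}(-j)|^2$ is always of \emph{even} polynomial degree in $j$, the partial sum is always of \emph{odd} polynomial degree in $k$, ruling out strict $(2m+1)$-isometry.

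The main obstacle is (i)$\Rightarrow$(ii): one must extract from the abstract strict $(2m)$-isometry condition the rationality of $\varphi$ together with the existence of a unique boundary pole of order $m$. I would feed the polynomial-growth information for $\|T_{\bar\varphi}S^kf\|_2^2$ through the reproducing kernels of $\mathcal{H}(b)$ and their derivatives, and invoke the Agler--Stankus structure theory for $m$-isometries to read off the rank-one positive defect of $Y$; this forces the asymptotic pattern $\widehat{\bar\varphi f}(-j)\sim C(f)\,\lambda^j j^{m-1}$ for large $j$, whence $\bar\varphi$ admits a meromorphic continuation across $\T$ with a single pole of order exactly $m$ at some $\lambda\in\T$. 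Once this localization is established, the canonical factorization $\varphi=b/a$ together with $|b|^2+|a|^2=1$ forces $a$ to vanish only at $\lambda$ and to multiplicity $m$, yielding (ii) and closing the cycle; the formulas for $p,r,\tilde p$ then follow as in the (ii)$\Rightarrow$(iii) direction.
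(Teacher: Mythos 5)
First, a point of reference: the paper does not prove this theorem at all. It is quoted verbatim as a result of Luo, Gu and Richter (\cite{LuoGuRich}) and used as a black box; the only things the paper proves in its vicinity are the subsequent Proposition (the sufficiency of \eqref{LGM} for $\H(b_\lambda)=\mathcal D^m_\lambda$ with equality of norms, via the explicit formula \eqref{T} for $T_{\overline{\varphi_\lambda^m}}$) and the Fej\'er--Riesz computation of $b_\lambda$. So there is no in-paper proof to compare against, and your attempt has to stand on its own as a proof of the cited theorem.

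As such, it has genuine gaps. The implication (ii)$\Rightarrow$(iii) and the formula $\|Y^kf\|_b^2=\|f\|_b^2+\sum_{j=1}^k|\widehat{\overline{\varphi}f}(-j)|^2$ are sound and are indeed the right reductions. But the two hard claims are asserted rather than proved. (1) For (iii)$\Rightarrow$(i) and the parity statement you need that $j\mapsto|\widehat{\overline{\varphi}f}(-j)|^2$ agrees, for $j\geq1$, with a polynomial of degree at most $2m-2$ for \emph{every} $f\in\H(b)$ (not just ``asymptotically'' and not just of ``exact degree $2m-2$,'' which fails for many $f$); since $\widehat{\overline{\varphi}f}(-j)=\zeta\lambda^{j-m}\sum_{n\geq0}\widehat{\tilde pf}(n)\lambda^n\binom{n+j-1}{m-1}$, even the convergence of the moment sums $\sum_n\widehat{\tilde pf}(n)\lambda^n n^k$, $k\leq m-1$, requires the regularity of $\tilde pf$ at $\lambda$ supplied by membership in $\mathcal D^m_\lambda$ (Lemma 9.1 of \cite{LuoGuRich}); and the assertion that such a nonnegative sequence ``is always of even polynomial degree,'' which is what kills strict $(2m+1)$-isometries, is not justified --- a nonnegative sequence on $j\geq1$ can perfectly well agree with an odd-degree polynomial, so one must use the specific quadratic structure of $|\widehat{\overline{\varphi}f}(-j)|^2$, not just its sign. (2) The implication (i)$\Rightarrow$(ii), which you yourself flag as the main obstacle, is not proved: ``feed the growth information through the reproducing kernels,'' ``invoke Agler--Stankus,'' and ``this forces the asymptotic pattern $\widehat{\overline{\varphi}f}(-j)\sim C(f)\lambda^jj^{m-1}$, whence $\overline{\varphi}$ continues meromorphically with a single pole of order $m$'' is a plan, not an argument; this deduction (rationality of the pair and localization of the zero of $a$ from the abstract $2m$-isometry identity) is precisely the substantive content of the theorem in \cite{LuoGuRich}, and nothing in the proposal actually carries it out.
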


In particular,  $\H(b)=\mathcal{D}_\lambda^m$ with equality of norms if and only if $b(z)=\frac{z^m}{r(z)}$, where $r$ is a polynomial of degree $m$ that has no zeros in $\overline{\D}$ and such that $|r(z)|^2=1+|z-\lambda|^{2m}$ for all $z\in\T$.

Now  we give another proof of the sufficient condition for  $\H(b)=\mathcal{D}^m_\lambda$ with equality of norms.% using the theory of $\H(b)$ spaces.

The following proposition is actually contained in the above mentioned result of \cite{LuoGuRich}.

\begin{pproposition}[\!\!\cite{LuoGuRich}]
If for $\lambda \in\mathbb T$  the function $\varphi_{\lambda}^m\in \mathcal{N}^{+}$ is defined by
  \begin{equation}
  \label{LGM}
	\varphi_{\lambda}^m (z) =\frac{(\bar\lambda z)^m}{(1-\bar \lambda z)^m},\quad z\in\mathbb D,
  \end{equation}
and its canonical representation is $\varphi_{\lambda}^m =\frac {b_{\lambda}}{a_{\lambda}}$, then $\mathcal H(b_{\lambda})= \mathcal D_{\lambda}^m$ with equality of norms.
\end{pproposition}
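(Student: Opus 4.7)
The strategy is to identify the canonical pair $(b_\lambda,a_\lambda)$ explicitly, compute $T_{\overline{\varphi_\lambda^m}}f$ directly for every $f\in\mathcal D_\lambda^m$, and then read equality of norms off \eqref{norm1}.

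To construct the pair, I would start from $\varphi_\lambda^m(z)=\bar\lambda^m z^m/(1-\bar\lambda z)^m$ (using $1-\bar\lambda z=-\bar\lambda(z-\lambda)$) and apply the Fej\'er--Riesz theorem to the nonnegative trigonometric polynomial $1+|z-\lambda|^{2m}$ on $\T$. This yields a unique polynomial $r$ of degree $m$ with $r(0)>0$, zero-free on $\overline{\D}$, and $|r|^2=1+|z-\lambda|^{2m}$ on $\T$. Setting
\[
b_\lambda(z)=\frac{\bar\lambda^m z^m}{r(z)},\qquad a_\lambda(z)=\frac{(1-\bar\lambda z)^m}{r(z)},
\]
short checks give $a_\lambda,b_\lambda\in H^\infty$, $|a_\lambda|^2+|b_\lambda|^2=1$ on $\T$, $a_\lambda(0)=1/r(0)>0$, $a_\lambda$ outer, and $b_\lambda/a_\lambda=\varphi_\lambda^m$, so $(b_\lambda,a_\lambda)$ is indeed the canonical pair.

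For the key step, fix $f=p+(z-\lambda)^m g\in\mathcal D_\lambda^m$ with $\deg p<m$ and $g\in H^2$, and let $h=\lambda^m g$. By Sarason's description of the domain of the unbounded Toeplitz operator $T_{\overline{\varphi_\lambda^m}}$ (cf.\ \cite{Sarason4}), showing that $f\in\mathcal H(b_\lambda)$ with $T_{\overline{\varphi_\lambda^m}}f=h$ reduces to verifying $\overline{b_\lambda}f-\overline{a_\lambda}h\in\overline{H^2_0}$. A direct calculation on $\T$ gives
\[
\overline{b_\lambda}f-\overline{a_\lambda}h=\frac{\lambda^m}{\overline{r}}\Bigl[\,\overline{z}^m p(z)+\bigl(\overline{z}^m(z-\lambda)^m-(1-\lambda\overline{z})^m\bigr)g(z)\Bigr],
\]
and the pivotal identity $\overline{z}(z-\lambda)=1-\lambda\overline{z}$ on $\T$ annihilates the $g$-term. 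The remainder $(\lambda^m/\overline{r})\,\overline{z}^m p(z)$ has Fourier support in the strictly negative integers, since $\deg p<m$ makes $\overline{z}^m p$ supported in $\{-m,\dots,-1\}$, and $r$ being zero-free on $\overline{\D}$ gives $1/\overline{r}\in\overline{H^\infty}$. This proves $T_{\overline{\varphi_\lambda^m}}f=\lambda^m g$, whence $\|T_{\overline{\varphi_\lambda^m}}f\|_2^2=\|g\|_2^2=\mathcal D_\lambda^m(f)$, the second equality following from the integral formula for $\mathcal D_\lambda^m$ and the fact that $p=T_{m-1}(f,\lambda)$.

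Combining with \eqref{norm1} gives $\|f\|_{b_\lambda}=\|f\|_{\mathcal D_\lambda^m}$ on $\mathcal D_\lambda^m$, so $\mathcal D_\lambda^m$ embeds isometrically into $\mathcal H(b_\lambda)$. Density of polynomials in both Hilbert spaces, together with agreement of the two norms on polynomials by the same computation, then forces equality $\mathcal H(b_\lambda)=\mathcal D_\lambda^m$ with equal norms. The only real obstacle in this plan is the verification that $\overline{b_\lambda}f-\overline{a_\lambda}h\in\overline{H^2_0}$; once the identity $\overline{z}^m(z-\lambda)^m=(1-\lambda\overline{z})^m$ on $\T$ is spotted, everything collapses, and this identity is the algebraic reason the particular choice \eqref{LGM} of $\varphi_\lambda^m$ yields equality of norms.
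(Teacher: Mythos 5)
Your proof is correct, but it takes a genuinely different route from the paper's. The paper never writes down the canonical pair at this stage; instead it proves, by induction on $m$, the pointwise formula $T_{\overline{\varphi_{\lambda}^m}}f=\lambda^m\bigl(f-T_{m-1}(f,\lambda)\bigr)/(z-\lambda)^m$ for $f\in\mathrm{Hol}(\overline{\D})$, using the factorization $T_{\overline{\varphi_{\lambda}^m}}=T_{\overline{\varphi_{\lambda}^1}}T_{\overline{\varphi_{\lambda}^{m-1}}}$ on $\mathrm{Hol}(\overline{\D})$ and the known formula for $T_{\overline{k}_\lambda}$ from \cite{LN}; it then checks equality of norms on polynomials and concludes by invoking the already-known \emph{equivalence} of the two norms from the \cite{LuoGuRich} theorem. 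You instead build $(b_\lambda,a_\lambda)$ explicitly via Fej\'er--Riesz (essentially anticipating the paper's Proposition 2), and verify $T_{\overline{b_\lambda}}f=T_{\overline{a_\lambda}}(\lambda^m g)$ for \emph{every} $f=p+(z-\lambda)^mg\in\mathcal D_\lambda^m$ through the boundary identity $\overline{z}^m(z-\lambda)^m=(1-\lambda\overline{z})^m$ on $\T$. This buys you an isometric embedding of all of $\mathcal D_\lambda^m$ into $\mathcal H(b_\lambda)$ in one stroke, so the final step needs only density of polynomials in $\mathcal H(b_\lambda)$ and not the prior norm-equivalence result --- your argument is the more self-contained of the two. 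Two small points: when you write $\|g\|_2^2=\mathcal D_\lambda^m(f)$ you are implicitly using that $T_{m-1}(f,\lambda)=p$, i.e.\ that $(z-\lambda)^mg$ and its first $m-1$ derivatives have nontangential limit $0$ at $\lambda$; this is exactly Lemma 9.1 of \cite{LuoGuRich}, which the paper cites elsewhere, so it should be referenced rather than asserted. And the membership $\overline{b_\lambda}f-\overline{a_\lambda}h\in\overline{zH^2}$ should be phrased as $P(\overline{b_\lambda}f-\overline{a_\lambda}h)=0$, which your Fourier-support argument (negative frequencies only, since $1/\overline{r}\in\overline{H^\infty}$ and $\overline{z}^mp$ is supported in $\{-m,\dots,-1\}$) does establish.
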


\begin{proof} %\textbf{1}.
We first show that for $f\in \text{Hol}(\overline{\mathbb{D}})$ -- the space of functions holomorphic on $\overline{\mathbb D}$,
  \begin{equation}
  \label{T}
    T_{\overline{\varphi_{\lambda}^m}}f(z)=\lambda^m \frac{f(z)-T_{m-1}(f,\lambda)(z)}{(z-\lambda)^m}
   \end{equation}
Let for $\lambda\in\mathbb T$,
  $$k_{\lambda}(z)=\frac 1{1-\overline{\lambda} z},\quad z\in \mathbb D.$$

It has been derived in \cite{LN} that for $f\in\text{ Hol}(\overline{\mathbb{D}})$,
  $$T_{\overline{k}_{\lambda}}f(z)=f(z)+\lambda\frac{f(\lambda)-f(z)}{\lambda-z}.$$
Since
  $$\varphi_{\lambda}^1(z)= \frac{\overline{\lambda}z}{1-\overline{\lambda} z},$$
we get
  $$T_{\overline{\varphi_{\lambda}^1}} f(z)= T_{\overline {k_{\lambda}-1}}f(z)= \lambda\frac{f(z)-f(\lambda)}{z-\lambda}$$
which proves (\ref{T}) for $m=1$.
Assume now that (\ref{T})  holds for $m-1$. Since for  a nonextreme $b$  the space $\text{Hol}(\overline{\mathbb{D}})$ is a subspace of  $\mathcal H(b)$ and $T_{\overline{\varphi}}(\text{Hol}(\overline{\mathbb{D}}))\subset \text{Hol}(\overline{\mathbb{D}})$, $\varphi=\frac{b}{a}$
(see  \cite[Lemma 6.1]{Sarason4}), we get
  \begin{align*} 
    T_{\overline{\varphi_{\lambda}^m}} f(z)
    &=T_{\overline{\varphi_{\lambda}^1}} T_{\overline{\varphi_{\lambda}^{m-1}}} f(z) \\
    &=T_{\overline{\varphi_{\lambda}^1}}\left(\lambda^{m-1}\frac{f(z)- T_{m-2}(f,\lambda)(z)}{(z-\lambda)^{m-1}}\right)
    = \lambda^m \frac{f(z)-T_{m-1}(f,\lambda)(z)}{(z-\lambda)^m},
  \end{align*}
where the last equality follows from the fact that for $f\in\text{Hol}(\overline{\mathbb{D}})$,
  $$\lim_{z\to\lambda}\frac{f(z)-T_{m-2}(f,\lambda)(z)}{(z-\lambda)^{m-1}}=\frac{f^{(m-1)}(\lambda)}{(m-1)!}.$$

Note now that polynomials are a dense subset of both $\mathcal H(b_{\lambda})$ and $\mathcal{D}_{\lambda}^m$, and by \eqref{norm1}, \eqref{Dm} and \eqref{T}, for each polynomial $f$,   
  $$\|f\|_{b_{\lambda}}=\|f\|_{\mathcal{D}_{\lambda}^m}.$$
Since the norms $\|\cdot\|_{b_{\lambda}}$ and $\|\cdot\|_{\mathcal{D}_{\lambda}^m}$ are equivalent, we get $\mathcal H(b_{\lambda})=\mathcal{D}_{\lambda}^m$ with equality of norms.
\end{proof}

%\begin{remark}
%Now assume that $f\in \mathcal H(b_{\lambda})$. %, where the nonextreme $b$ satisfies the hypothesis.
%Since polynomials are dense in $\mathcal  H(b_{\lambda})$,  we can choose a sequence of polynomials $\{p_n\}$ such that $p_n\to f$ in $\mathcal H(b_{\lambda})$. It follows from Sarason's results \cite[(VII-2)]{Sarason1} (see also  \cite{NSS}) that for every $k=0,1,\dots, m-1$ and $f\in\mathcal H(b_{\lambda})$ the nontangenial limit $f^{(k)}(\lambda)$ exists and the functional $f\to f^{(k)}(\lambda)$ is bounded in $\mathcal H(b_{\lambda})$. This implies that for $z\in\mathbb D$,
%\begin{align*}
%T_{\overline{\varphi_{\lambda}^m}}f(z)&= \lim_{n\to\infty}T_{\overline{\varphi_{\lambda}^m}} p_n(z)= %\lim_{n\to\infty}
%\lambda^m \frac{p_n(z)-T_{m-1}(p_n,\lambda)(z)}{(z-\lambda)^m}\\[4pt]
%&= \lambda^m \frac{f(z)-T_{m-1}(f,\lambda)(z)}{(z-\lambda)^m}.
%\end{align*}
%\end{remark}

In the following  proposition we find the explicit formulas for $b_{\lambda}$ for which $\H(b_{\lambda}) =\mathcal D_{\lambda}^m$ with equality of norms.

\begin{proposition}
Let for $\lambda \in\mathbb T$ and $m\in\mathbb N$  the function $\varphi_{\lambda}^m\in \mathcal{N}^{+}$ be defined by \eqref{LGM}.
Then the canonical representation $(b_{\lambda}, a_{\lambda})$ of $\varphi_{\lambda}^m$ is given by
  $$b_{\lambda}(z)=\frac{C_m(\bar\lambda z)^m}{\prod_{k=1}^m(1 - \bar  \lambda z_k z)},\quad a_{\lambda}(z)=\frac{C_m(1-\bar\lambda z)^m}{\prod_{k=1}^m(1 - \bar  \lambda z_k z)}$$
with
$C_m= \prod_{k=1}^m(1-z_k)$, where $z_k$ are preimages of the $m$-th roots of $1$ for odd m and $m$-th roots of $-1$ for even $m$ under the Koebe function $k(z)=\frac{z}{(1-z)^2}$.
\end{proposition}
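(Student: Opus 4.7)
The plan is to reduce to the case $\lambda=1$ by the rotation $w=\bar\lambda z$, which turns $\varphi_\lambda^m(z)$ into $\varphi(w)=w^m/(1-w)^m$, and then to construct the outer denominator $a$ of the canonical pair of $\varphi$ by explicitly factoring a polynomial whose zeros are Koebe preimages of roots of $\pm 1$. Since $(b,a)$ is a pair and $b=\varphi a$, the identity $|a|^2+|b|^2=1$ on $\mathbb T$ is equivalent to $|a(w)|^2=1/(1+|\varphi(w)|^2)$; applying the elementary identity $|1-w|^2=-(1-w)^2/w$ valid for $|w|=1$, one rewrites this as
$$|a(w)|^2 \;=\; \frac{w^m\,|1-w|^{2m}}{P(w)}, \qquad P(w):=w^m+(-1)^m(1-w)^{2m},$$
so the task is reduced to understanding the degree-$2m$ polynomial $P$.

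The next step is to locate the zeros of $P$. The equation $P(w)=0$ is equivalent to $k(w)^m=(-1)^{m+1}$ with the Koebe function $k(w)=w/(1-w)^2$, which matches the description in the statement (roots of $+1$ for odd $m$, of $-1$ for even $m$). Solving $k(w)=c$ is a quadratic whose two roots multiply to $1$ by Vieta, so the $2m$ zeros of $P$ split into $m$ reciprocal pairs $\{z_k,1/z_k\}$, exactly one member of each pair lying inside $\mathbb D$; call these inside preimages $z_1,\ldots,z_m$. Because $P$ has real coefficients and $\mathbb D$ is invariant under conjugation, $\{\bar z_k\}=\{z_k\}$. Comparing leading coefficients ($(-1)^m$ for $P$ versus $1$ for the product) then yields
$$P(w) \;=\; (-1)^m\prod_{k=1}^m(w-z_k)(w-1/z_k).$$

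Now set $B(w)=\prod_{k=1}^m(1-z_k w)$. Using $\bar w=1/w$ on $\mathbb T$ together with $\{\bar z_k\}=\{z_k\}$ and the identity $(1-z_k w)=-z_k(w-1/z_k)$, a short calculation gives
$$w^m\,|B(w)|^2 \;=\; \Bigl(\prod_{k=1}^m z_k\Bigr) P(w) \qquad\text{on }|w|=1.$$
Combining this with the expression for $|a|^2$ above forces $a(w)=C(1-w)^m/B(w)$ with $|C|^2=\prod z_k$, and then $b(w)=\varphi(w)a(w)=Cw^m/B(w)$; reinstating $w=\bar\lambda z$ produces exactly the forms claimed. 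Outerness of $a$ is automatic because the poles $1/z_k$ of $B$ lie outside $\overline{\mathbb D}$ and the only zero of $(1-w)^m$ is at $w=1\in\mathbb T$.

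It remains to identify $C$ with $C_m:=\prod_{k=1}^m(1-z_k)$. From $k(z_k)=c_k$ one has $(1-z_k)^2=z_k/c_k$; multiplying over $k$ gives $C_m^2=\prod z_k/\prod c_k$, and since $\prod c_k$ is the product of all $m$-th roots of $(-1)^{m+1}$ it equals $(-1)^{m+1}(-1)^{m-1}=1$, so $C_m^2=\prod z_k$. Conjugation symmetry of $\{z_k\}$ makes $C_m$ real, and a short sign check (any real $z_k$ satisfies $z_k=(1-z_k)^2$, hence $z_k\in(0,1)$ and $1-z_k>0$, while conjugate pairs contribute $|1-z_k|^2>0$) shows $C_m>0$, so the normalization $a(0)>0$ pins down $C=C_m$. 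The main obstacle is the bookkeeping of the reciprocal-and-conjugate symmetry of the zeros of $P$ together with the Koebe product identity $\prod c_k=1$, which is precisely what makes the constant close up to the clean form $\prod(1-z_k)$.
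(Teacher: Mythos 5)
Your argument is correct and follows essentially the same route as the paper's proof: both reduce to the polynomial $w^m+(-1)^m(1-w)^{2m}$, locate its zeros as Koebe preimages of the $m$-th roots of $\pm1$ paired reciprocally, and extract the outer denominator from the factor with zeros outside $\overline{\mathbb D}$. The only differences are cosmetic — the paper invokes Fej\'er--Riesz for the existence and uniqueness of the denominator $r$ where you construct it directly, and you supply a more explicit verification (via $\prod_k c_k=1$ and the positivity check) of the normalizing constant $C_m=\prod_k(1-z_k)$ that the paper states without detail.
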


\begin{proof}
Assume that $\varphi_{\lambda}^m=\dfrac{b_\lambda}{a_\lambda}$ where $ \left(b_\lambda,a_\lambda\right)$ is a pair.
Then for $|z|=1$
  $$1+|1-\bar\lambda z|^{2m} =\frac{|1-\bar\lambda z|^{2m}} {|a_\lambda(z)|^2}.$$
By Fej\'{e}r-Riesz theorem there is a unique polynomial $r$ of degree $m$ without zeros in $\overline{\mathbb D}$, such that $r(0)>0$ and
\begin{equation}
	1 + |1-\bar\lambda z|^{2m}=|r(z)|^2.\label{rz}
\end{equation}
Now define a polynomial
  $$w(z)= z^m + (2 z -z^2-1)^m = z^m+(-1)^m(1-z)^{2m} $$ 
and observe that for $|z|=1$,
  $$|w(\bar\lambda z)|= 1 + |1-\bar\lambda z|^{2m}.$$
Next note that $w(z)=0$ if and only if
  $$\left(\frac{z}{(1-z)^2}\right)^m=(-1)^{m+1}.$$
This implies that, in the case when $m$ is  odd, the zeros of $w$ in $\D$ are $z_k=g(e_k),$ $k=0,1,\dots, m-1$ where $g $ is the inverse of the Koebe function $k(z)=\frac z{(1-z)^2}$   and $e_k$ are the $m$-th roots of $1$.
In the case $m$ is even, the zeros of $w$ in $\mathbb D$ are $z_k=g(\widetilde{e}_k),$ $k=1,\dots m$, where $\widetilde{e}_k$ are the  $m$-th roots of $-1$.
The other zeros of $w$ are $\frac 1{\bar z_k}$, $k=1,\dots, m$. Consequently, the zeros of $w(\bar \lambda z)$ in $\D$ are $\lambda z_k$, $k=1,\dots m$. Thus
  $$ w(\bar\lambda z)=c\prod_{k=1}^m(z-\lambda z_k)(z-\frac{1}{\overline{\lambda z_k}}).$$
Since for $|z|=1$
  $$|z-\lambda z_k|= |z\overline{\lambda z_k}-1|,$$
the polynomial $r(z)$ is given by
  $$r(z)= c_0\prod_{k=1}^m(1 - \bar  \lambda z_k z),$$
where in view of \eqref{rz}, $c_0= (\prod_{k=1}^m(1-z_k))^{-1}$.
We also  mention that
  $$\min_{|z|=1}|r(z)| = 1 =r(\lambda). $$
Consequently, we get
  $$b_\lambda(z)= \frac {(\bar\lambda z)^m}{r(z)},\quad  a_\lambda(z)= \frac{(1-\bar\lambda z)^m}{r(z)},$$
which proves the proposition.
\end{proof}

\vspace{.2in}
\section{ $\H(b)$ generated by nonextreme rational $b$}%result shows

Let $(b,a)$  be a rational pair and let $\lambda_1,\lambda_2,\dots\lambda_m $ be all the zeros of $a$ on $\T$, listed according to multiplicity.
It has  been proved in \cite{ransford2} that for every such rational pair
\begin{equation}\label{spp}
  \mathcal H(b)=\left\{\prod_{j=1}^m(z-\lambda_j)g +s:\ g\in H^2,\ s\in \mathcal{P}_{m-1}\right\},
\end{equation}
where $\mathcal{P}_{n}$ denotes the set of polynomials of degree at most $n$. This means that the space $\mathcal H(b)$  is in fact determined by the zeros of $a$ on $\mathbb{T}$. Moreover, in the case when $\lambda_1=\lambda_2=\ldots=\lambda_m=\lambda$, by \eqref{star}, $\mathcal H(b)=\mathcal D_\lambda^m$ (as sets).

For fixed $\lambda_1,\lambda_2,\dots\lambda_m \in\mathbb{T}$ and a polynomial $p$ such that $p(\lambda_j)\ne 0$, $j=1,2,\ldots,m$, consider the function $\varphi\in\mathcal{N}^{+}$ defined by
  \begin{equation}
  \label{fi}
	\varphi(z)=\dfrac{p(z)} {\prod_{j=1}^m\left(1-\overline{\lambda}_jz\right)}=\dfrac{b(z)}{a(z)},
	%=\dfrac{b_m(z)}{a_m(z)}
  \end{equation}
where $(b,a)$ is a canonical pair. It is worth noting that the pair $(b,a)$ is rational. Moreover, by the aforementioned result from \cite{ransford2}, for each polynomial $p$, the space $\mathcal H(b)$ is described by \eqref{spp} and the corresponding norms $\|\cdot\|_b$ given by \eqref{norm1} are equivalent.

For a positive integer $m$ set
  \begin{equation*}
  \label{1raz}
	\varphi_m(z)=\dfrac{z^m}{\prod_{j=1}^m \left(1-\overline{\lambda}_jz\right)}=\dfrac{b_m(z)}{a_m(z)}.
  \end{equation*}
%and
%\begin{equation}
%\varphi(z)=\frac{p(z)}{\prod_{j=1}^m\left(1-\overline{\lambda}_jz\right)}=\dfrac{\alpha_0 + \alpha_1z+\dots +\alpha_mz^m}{\prod_{j=1}^m\left(1-\overline{\lambda}_jz\right)}
%=\dfrac{b(z)}{a(z)},\label{fi}
%\end{equation}
%where $p$ is a polynomial of degree $\leq m$ such that $p(\lambda_j)\ne 0$.
%Then clearly  $\lambda_j$, $j=1,\dots , m$, are the only zeros on $\mathbb{T}$ of both $a_m$ and $a$. It follows that $\H(b_m)=\H(b)$ as sets and the norms $\|\cdot\|_b$ and $ \|\cdot\|_{b_m}$ are equivalent. Moreover, both $a_m/a$ and $a/a_m$ belong to $H^\infty$ \cite[Vol. 2, Cor. 27.17]{FM} and so $\mathfrak D(T_{{\varphi_m}})=a_mH^2=aH^2=\mathfrak D(T_{{\varphi}})$.

The next proposition slightly extends the result contained in \cite [Theorem 9.4]{LuoGuRich}.

\begin{proposition} 
Let $\varphi=b/a$ be defined by \eqref{fi} with the polynomial $p$ of degree $\leq m$, and let $q$ be a polynomial of degree $\leq m$ such that $q(\lambda_j)\ne 0$ for $j=1,\ldots,m$. Then
  \begin{equation}
  \label{norm}
	\|f\|^2_b=\|f\|^2_2+\|T_{\overline{\varphi}_m}(qf)\|^2_2\quad \text{for each }f\in\mathcal{H}(b)
  \end{equation}
if and only if, up to a multiplicative unimodular constant, $q(z) =\widetilde{p}(z)=z^m \overline{p\left(\frac 1{\bar z}\right)}$.
\end{proposition}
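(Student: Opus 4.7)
My plan is to reduce both sides of \eqref{norm} to canonical de Branges--Rovnyak norms via \eqref{norm1} for two suitably chosen Smirnov functions, and then to invoke the uniqueness of the reproducing kernel of a Hilbert space. The central observation is the pointwise identity on $\T$,
\begin{equation*}
  \overline{\varphi(z)} = \overline{\varphi_m(z)}\,\widetilde{p}(z),
\end{equation*}
which is immediate from $\widetilde{p}(z)=z^m\overline{p(z)}$ for $z\in\T$ together with the definitions of $\varphi$ and $\varphi_m$. Since $\widetilde{p}$ is an analytic polynomial, this lifts to the operator identity $T_{\overline{\varphi}}f = T_{\overline{\varphi_m}}(\widetilde{p}f)$ valid for every $f\in\mathcal{H}(b)$. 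Combined with \eqref{norm1} this already yields the sufficiency direction, since replacing $\widetilde{p}$ by $c\widetilde{p}$ with $|c|=1$ leaves the $L^2$-norm of the right-hand side unchanged.

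For the necessity direction, I set $\psi = \widetilde{q}/\prod_{j=1}^m(1-\overline{\lambda}_j z)\in\mathcal{N}^+$ and let $(b^{(\psi)}, a^{(\psi)})$ be its canonical pair. Applying the displayed identity with $\widetilde{q}$ in place of $p$ and using $\widetilde{\widetilde{q}} = q$ gives $T_{\overline{\psi}}f = T_{\overline{\varphi_m}}(qf)$, so \eqref{norm} rewrites as $\|f\|_b^2 = \|f\|_2^2 + \|T_{\overline{\psi}}f\|_2^2 = \|f\|^2_{b^{(\psi)}}$ for every $f\in\mathcal{H}(b)$. Because $q(\lambda_j)\neq 0$ forces $\widetilde{q}(\lambda_j) = \lambda_j^m\overline{q(\lambda_j)}\neq 0$, the zeros of $a^{(\psi)}$ on $\T$ are exactly $\lambda_1,\ldots,\lambda_m$ with the same multiplicities as those of $a$. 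Hence by \eqref{spp} the spaces $\mathcal{H}(b)$ and $\mathcal{H}(b^{(\psi)})$ coincide as sets, and the norm identity just obtained says they carry the same Hilbert space structure.

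Equal Hilbert space norms on a common space of analytic functions force the reproducing kernels to coincide, so $\overline{b(w)}b(z) = \overline{b^{(\psi)}(w)}b^{(\psi)}(z)$ throughout $\D\times\D$. A standard separation of variables then yields $b = c\,b^{(\psi)}$ with $|c|=1$; combining $|a|^2 = 1-|b|^2 = |a^{(\psi)}|^2$ on $\T$ with the canonicality conditions $a(0),a^{(\psi)}(0)>0$ forces $a = a^{(\psi)}$. Therefore $\varphi = c\,\psi$, i.e.\ $p = c\widetilde{q}$; applying the operation $u\mapsto\widetilde{u}$ and using $\widetilde{\alpha u} = \overline{\alpha}\widetilde{u}$ together with $\widetilde{\widetilde{q}} = q$ yields $\widetilde{p} = \overline{c}\,q$, so $q = c\widetilde{p}$ (since $|c|=1$).

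The main technical obstacle I anticipate is that $T_{\overline{\varphi_m}}$ is only densely defined (as $\overline{\varphi_m}\notin L^\infty(\T)$), so the operator identities above make literal sense only once one has verified that $\widetilde{p}f$ and $qf$ belong to $\mathrm{dom}(T_{\overline{\varphi_m}}) = \mathcal{H}(b_m)$ for every $f\in\mathcal{H}(b)$. This follows directly from \eqref{spp}: writing $f = \prod_{j=1}^m(z-\lambda_j)g + s$ with $g\in H^2$ and $s\in\mathcal{P}_{m-1}$, and performing polynomial division of $\widetilde{p}\,s$ (respectively $q\,s$) by $\prod_{j=1}^m(z-\lambda_j)$, with Hermite division in case of repeated $\lambda_j$'s, exhibits the product in the required form.
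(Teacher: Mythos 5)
Your proposal is correct and follows essentially the same route as the paper: prove sufficiency from the factorization $T_{\overline{\varphi}}f=T_{\overline{\varphi}_m}(\widetilde{p}f)$, and for necessity introduce the auxiliary Smirnov function $\widetilde{q}/\prod_{j=1}^m(1-\overline{\lambda}_jz)$, deduce that the two de Branges--Rovnyak spaces coincide with equal norms, conclude $b=c\,b_*$ with $|c|=1$, and transfer this back to $q=c\,\widetilde{p}$ via the outer factors. The only differences are cosmetic: where the paper cites \cite[Vol. 2, Cor. 27.12]{FM} for the step ``equal $\H(b)$-norms imply $b=c\,b_*$,'' you reprove it via uniqueness of the reproducing kernel, and you make explicit the domain verification for the unbounded operator $T_{\overline{\varphi}_m}$ that the paper leaves implicit.
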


\begin{proof}
Assume that $q=\widetilde{p}$. Then, by Proposition 6.5 in \cite{Sarason4},
  $$T_{\overline{\varphi}}f
  =T_{\frac{1}{\overline{\prod_{j=1}^m(1-\overline{\lambda}_jz)}}} T_{\overline{p}}f
  = T_{\frac{1}{\overline{\prod_{j=1}^m(1-\overline{\lambda}_jz)}}}
  T_{\bar z^m}(\widetilde{p}f)
  =T_{\overline{\varphi}_m}(\widetilde{p}f).$$

Assume now that \eqref{norm} holds with a polynomial $q$. Since
  $$\|f\|^2_b=\|f\|^2_2+\|T_{\overline{\varphi}}f\|^2_2,$$
we get
  $$\|T_{\overline{\varphi}}f\|^2_2=\|T_{\overline{\varphi}_m}(qf)\|^2_2.$$
On the other hand,
  $$T_{\overline{\frac{\widetilde{q}} {\prod_{j=1}^m(1-\overline{\lambda}_jz)}}}f
  =T_{\frac{1}{\overline{\prod_{j=1}^m(1-\overline{\lambda}_jz)}}} T_{\overline{\widetilde{q}}}f
  =T_{\frac{1}{\overline{\prod_{j=1}^m(1-\overline{\lambda}_jz)}}} 
  T_{\bar z^m}(qf)=T_{\overline{\varphi}_m}(qf),$$
and so
  $$\|T_{\overline{\varphi}}f\|^2_2=\|T_{\overline{\varphi}_m}(qf)\|^2_2 =\|T_{\overline{\frac{\widetilde{q}} {\prod_{j=1}^m(1-\overline{\lambda}_jz)}}}f\|^2_2.$$
Consequently, if
  \begin{equation*}
  \label{fist}
	\varphi_{*}(z)=\frac{\widetilde{q}(z)}{\prod_{j=1}^m\left(1-\overline{\lambda}_jz\right)}
	=\dfrac{b_{*}(z)}{a_{*}(z)},
  \end{equation*}
then $\mathcal{H}(b)=\mathcal{H}(b_{*})$ with equality of norms. By \cite[Vol. 2, Cor. 27.12]{FM}, $b=c_{*}\cdot b_{*}$ for some unimodular constant $c_{*}$. Since $|a|=|a_{*}|$ on $\mathbb{T}$
and the modulus of an outer function on $\mathbb{T}$ determines that function up to a multiplicative unimodular constant,
we obtain that $q=c\cdot\widetilde{p}$ for some $c\in\mathbb{T}$.
\end{proof}
%
%\begin{proof}
%	We first show that equality \eqref{norm} holds if $q=\widetilde{p}$. To this end assume that
%	$f^+=T_{\overline \varphi}f$, or equivalently
%	$$
%	T_{\overline{p}}f= T_{\overline{\prod_{j=1}^m\left(1-\overline{\lambda}_jz\right)}}f^+,
%	$$
%	Thus
%	$$
%	T_{\overline{p}}f= P(\bar z^m(\overline{\alpha}_0z^m+\overline{\alpha}_1z^{m-1}+\ldots+\overline{\alpha}_{m})f)= T_{\bar z^m}(\widetilde{p}f)
%	$$
%	and, consequently,
%	$$
%	f^+=  T_{\frac{\bar z^m}{\overline{\prod_{j=1}^m\left(1-\overline{\lambda}_jz\right)}}}(\widetilde{p}f)=T_{\overline{\varphi}_m}(\widetilde{p}f).
%	$$
%	Assume now that that equality \eqref{norm} holds for $q$ and let $ \widetilde{f}= T_{\overline{\varphi}_m}(qf)$. Then
%	\begin{equation} T_{\overline{z}^m}(qf)=T_{ \prod_{j=1}^m\left(1-\lambda_j\overline z\right)}\widetilde{f}.\label{tilde}\end{equation}
%	Put $q(z)=\beta_0+ \beta_1z+\dots+\beta_m z^m$. Then $\widetilde{q}(z)=\overline{\beta}_0z^m+\overline{\beta}_1z^{m-1} +\dots+\overline{\beta}_m $ on $\mathbb{T}$ and by \eqref{tilde},
%	$$ T_{\overline{\widetilde {q}}} f= T_{\overline z^m}(qf)= T_{ \prod_{j=1}^m\left(1-\lambda_j\overline z\right)}\widetilde{f}.$$
%	If $\widetilde b/\widetilde a$ is the canonical representation of $ \widetilde {q}\left(\prod_{j=1}^m\left(1-\overline{\lambda_j}z \right)\right)^{-1}$,
%	then equality \eqref{norm}  means that $\|f\|_{b}=  \|f\|_{\widetilde b}$, which implies that $b=c\widetilde b$ with unimodular constant $c$.
%\end{proof}

We now prove the following
\begin{theorem}
\label{wandering}
Let $ (b,a)$ be a rational pair such that the corresponding Smirnov function $\varphi$  is given by \eqref{fi} with $p(z)=\alpha_0+\alpha_1z+ \dots + \alpha_mz^m$ such that $p(\lambda_j)\ne0,$ $j=1,2\dots m$.
Then there is a finite Blaschke product $B_m$ of degree $m$, such that the outer part of a wandering vector of $Y$  lies in $\H(zB_m)$.
%, where $B_m$ denotes the finite Blaschke product with zeros  $z_k= \frac1{\overline{w_k}}$, $k=1,2\dots,m$.
\end{theorem}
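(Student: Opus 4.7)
My plan is to adapt Sarason's approach from \cite{Sarason3}, where the analogous statement is proved for wandering vectors of $S_\mu$ on the harmonically weighted Dirichlet spaces $\mathcal{D}(\mu)$ with finitely atomic $\mu$. The first step will be to rewrite the wandering condition $\langle f, z^n f\rangle_b = 0$ for $n \geq 1$ using the alternative norm formula from the preceding Proposition, $\|f\|_b^2 = \|f\|_2^2 + \|T_{\overline{\varphi_m}}(\widetilde{p} f)\|_2^2$, together with the fact that $\overline{\varphi_m}|_{\mathbb{T}} = 1/\widetilde{Q}$ where $\widetilde{Q}(z) = \prod_{j=1}^m(z-\lambda_j)$. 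Polarizing and computing $T_{\overline{\varphi_m}}(z^n \widetilde{p} f)$ in terms of $H = T_{\overline{\varphi_m}}(\widetilde{p} f)$ via the near-intertwining relation of $T_{\overline{\varphi_m}}$ with $S$ (which holds modulo a rank-$m$ correction arising from the poles at the $\lambda_j$), the wandering condition becomes a sequence of linear relations on the Fourier coefficients of $|f|^2$, $|H|^2$, and a coanalytic remainder $\kappa$.

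Summing these relations and using $1 + |\varphi|^2 = 1/|a|^2$, I expect to obtain the boundary identity
\begin{equation*}
|f(z)|^2 = |a(z)|^2\bigl(|\kappa(z)|^2 + \|f\|_b^2\bigr) \qquad \text{a.e.\ on }\mathbb{T}.
\end{equation*}
Writing $f = \theta u$ in inner--outer form and comparing moduli, the outer part will factor as $u = c\,a\,v$, where $c$ is a unimodular constant and $v$ is the outer function with $|v|^2 = |\kappa|^2 + \|f\|_b^2$. The rationality of $b$ implies that $a = Q/r$ is rational, with $Q(z) = \prod_{j=1}^m(1-\overline{\lambda_j}z)$ and $r$ the Fej\'er--Riesz polynomial of degree $m$ that is outer and has no zeros in $\overline{\mathbb{D}}$.

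The last step will be to place $u$ into a model space $\mathcal{H}(zB_m)$ for a suitable finite Blaschke product $B_m$ of degree $m$. Factoring $r(z) = c_0 \prod_{k=1}^m(1-\overline{\alpha_k}z)$ with $\alpha_k \in \mathbb{D}$ and setting $B_m(z) = \prod_{k=1}^m(z-\alpha_k)/(1-\overline{\alpha_k}z)$, the space $\mathcal{H}(zB_m)$ consists precisely of the rational functions $P(z)/\prod_{k=1}^m(1-\overline{\alpha_k}z)$ with $P$ a polynomial of degree at most $m$. The rank-$m$ correction structure, combined with the rationality of $b$, should force $\kappa$ to be rational with denominator dividing $r$; a careful Fej\'er--Riesz and degree analysis on $|v|^2 = |\kappa|^2 + \|f\|_b^2$ should then show that $u = c\,a\,v$ collapses to the required form. \emph{The main obstacle is controlling the inner and denominator structure of $\kappa$}: certifying that the wandering condition, despite being an infinite family of relations, yields only finitely many effective constraints and thus forces $u$ into the $(m+1)$-dimensional model space $\mathcal{H}(zB_m)$, in close analogy with Sarason's more delicate calculation in \cite{Sarason3}.
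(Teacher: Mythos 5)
Your outline stops exactly where the theorem begins: everything after ``I expect to obtain'' is conjectural, and the step you yourself flag as the main obstacle --- controlling the denominator and inner structure of $\kappa$ and showing that the infinite family of wandering relations pins $u$ down to an $(m+1)$-dimensional space --- is the whole content of the statement. Worse, the intermediate identity $|f|^2=|a|^2\bigl(|\kappa|^2+\|f\|_b^2\bigr)$ with $\kappa$ rational and denominator dividing $r$ looks incompatible with the conclusion you are aiming at: if the outer part $h_0$ lies in $\H(zB_m)$, then $h_0=P/\prod_k(1-\overline{\alpha}_kz)$ with $\deg P\le m$, so on $\T$ one has $|h_0|^2/|a|^2=\mathrm{const}\cdot|P|^2/\prod_j|1-\overline{\lambda}_jz|^2$, which is unbounded near the $\lambda_j$ whenever $P(\lambda_j)\ne0$, whereas $|\kappa|^2+\|f\|_b^2$ with $\kappa$ pole-free on $\overline{\D}$ is bounded on $\T$. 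So either $\kappa$ must be permitted poles at the $\lambda_j$ or the identity needs to be reworked; as written the route does not close.

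The paper sidesteps all of this with a single choice of test functions, following Lemma~3 of \cite{Sarason3}: instead of pairing $h$ with $z^{k+1}h$ and tracking rank-$m$ corrections to the intertwining of $T_{\overline{\varphi}}$ with $S$, pair it with $z^{k+1}\prod_{j=1}^m(1-\overline{\lambda}_jz)^2h$, which is a finite linear combination of the $z^nh$, $n\ge1$, hence still $\H(b)$-orthogonal to $h$. On $\T$ one has $\overline{1-\overline{\lambda}_jz}=-\overline{\lambda_jz}\,(1-\overline{\lambda}_jz)$, so the square of the product cancels the denominator of $\overline{\varphi}$ and converts the symbol into an analytic function; the projection inside $T_{\overline{\varphi}}$ then acts as the identity and the $\H(b)$-inner product collapses \emph{exactly} to $\langle z^{k+1}Wh_0,h_0\rangle_2=0$, where $W=\prod_j(1-\overline{\lambda}_jz)^2+(-1)^m\overline{\lambda}\,p\,\widetilde{p}$ is an explicit polynomial equal on $\T$ to $(-1)^m\overline{\lambda}z^{m-m_0}w(z)$ with $w$ the Fej\'{e}r--Riesz polynomial of $|Q|^2+|p|^2$. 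Since $h_0$ is outer, the closed span of $\{z^{k+1}Wh_0\}_{k\ge0}$ is $zB_mH^2$ with $B_m=z^{m-m_0}B_{m_0}$ built from the zeros $1/\overline{w}_k$ of $W$ in $\D$, and Beurling's theorem gives $h_0\in H^2\ominus zB_mH^2=\H(zB_m)$. No boundary modulus identity, no summation of relations, and no separate degree analysis are needed.
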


\begin{proof}
We now use reasoning from the proof of Fej\'{e}r-Riesz theorem (see, e.g., \cite{helton}). Set
  $$v(e^{it})= \left|\prod_{j=1}^m\left(1-\overline{\lambda}_je^{it}\right)\right|^2+ |p(e^{it})|^2=\sum_{j=-m_0}^{m_0}c_j e^{ijt},$$
where $m_0\leq m$ and $\bar c_{j}=c_{-j}$, $j=1,2,\ldots,m_0$. Then $v(e^{it})>0$ and
  $$w(z)=z^{m_0}v(z)=\sum_{j=0}^{2{m_0}}c_{j-{m_0}} z^{j}, \quad z\in\mathbb{D},$$
is an analytic polynomial of degree $2{m_0}$ with no zeros on $\mathbb{T}$ and such that $w(0)\neq0$. Moreover, the zeros of $w$ are $w_k$ and $\frac1{\overline{w}_k}$, where $w_k\in \mathbb C\setminus\overline{\mathbb D}$, $k=1,2,\dots,{m_0}$. Therefore
  \begin{equation}
  \label{WW}
	w(z)=c\prod_{j=1}^{m_0}(z-w_j)(1-\overline{w}_jz),\ c>0.
  \end{equation}
%and the desired $r$ is given by 
%$$r(z)=\sqrt{c}\prod_{j=1}^m(z-w_k).$$

With the notation above we define the function
  $$ W(z)=\prod_{j=1}^m\left(1-\overline{\lambda}_jz\right)^2 +(-1)^m\overline{\lambda} p(z)\widetilde{p}(z),$$
where $\lambda=\lambda_1\cdot\lambda_2\cdot\ldots\cdot\lambda_m$. Observe that for $|z|=1$,

  \begin{align*}
    W(z)&=(-1)^m\overline{\lambda}z^m
    \left(\prod_{j=1}^m\left(1-\overline{\lambda}_jz\right)
    \left(1-{\lambda}_j\overline{z}\right) + \overline{z}^m\widetilde{p}(z)p(z)\right)\\ 
    &=(-1)^m\overline{\lambda}z^m\left(\prod_{j=1}^m
    \left(1-\overline{\lambda}_jz\right)
    \left(1-{\lambda}_j\overline{z}\right) +\overline{p(z)}p(z)\right)\\
    &=(-1)^m\overline{\lambda}z^{m}v(z) =(-1)^m\overline{\lambda}z^{m-m_0}w(z).
    %(-1)^m\overline{\lambda}z^m|r(z)|^2,
  \end{align*}
%we see that $|W(z)|=|r(z)|^2.$
Now we apply the idea from the proof of Lemma 3 in \cite{Sarason3}.  If we assume that $h$ is a wandering vector of $Y$, then for $k=0,1,\dots$,
  \begin{align*}
    0=& \left\langle z^{k+1}\prod_{j=1}^m \left(1-\overline{\lambda}_jz\right)^2 h, h\right\rangle_b\\
	=&\left\langle z^{k+1}\prod_{j=1}^m \left(1-\overline{\lambda}_jz\right)^2 h, h\right\rangle_{2} +\left\langle T_{\overline{\varphi}} \left(z^{k+1}\prod_{j=1}^m \left(1-\overline{\lambda}_jz\right)^2 h\right), T_{\overline{\varphi}}\left(h\right)\right\rangle_{2}\\
%=&\left\langle z^{k+1}\prod_{j=1}^m\left(1-\overline{\lambda}_jz\right)^2 h, h\right\rangle_{2}+\left\langle P\left(\overline{\varphi} z^{k+1}(-1)^m\overline{\lambda}z^m\prod_{j=1}^m\left(1-\overline{\lambda}_jz\right)\left(1-{\lambda}_j\overline{z}\right) h\right), T_{\overline{\varphi}}\left(h\right)\right\rangle_{2}\\
	=&\left\langle z^{k+1}\prod_{j=1}^m \left(1-\overline{\lambda}_jz\right)^2 h, h\right\rangle_{2} +\left\langle P\left(\frac{\overline{p(z)}z^{k+1} \prod_{j=1}^m\left(1-\overline{\lambda}_jz\right)^2 h} {\prod_{j=1}^m\left(1-{\lambda}_j\overline{z}\right)}\right), T_{\overline{\varphi}}\left(h\right)\right\rangle_{2}\\
	=&\left\langle z^{k+1}\prod_{j=1}^m \left(1-\overline{\lambda}_jz\right)^2 h, h\right\rangle_{2} +\left\langle (-1)^m\overline{\lambda}z^m \overline{p(z)}z^{k+1} \prod_{j=1}^m\left(1-\overline{\lambda}_jz\right) h, T_{\overline{\varphi}}\left(h\right)\right\rangle_{2}\\
	=&\left\langle z^{k+1}\prod_{j=1}^m \left(1-\overline{\lambda}_jz\right)^2 h, h\right\rangle_{2} +\left\langle\varphi(z) (-1)^m\overline{\lambda} \widetilde{p}(z)z^{k+1}\prod_{j=1}^m \left(1-\overline{\lambda}_jz\right) h, h\right\rangle_{2}\\
	=&\left\langle z^{k+1}\prod_{j=1}^m \left(1-\overline{\lambda}_jz\right)^2 h, h\right\rangle_{2} +\left\langle (-1)^m\overline{\lambda}p(z)\widetilde{p}(z)z^{k+1}h, h\right\rangle_{2} =\left\langle z^{k+1}Wh_0,h_0\right\rangle_{2},\\
  \end{align*}
where $h_0$ denotes the outer part of $h$. It follows that $h_0$ is orthogonal to the subspace of $H^2$ spanned by $z^{k+1}Wh_0$, $k=1,2,\ldots$. Since $h_0$ is an outer function this subspace is $zB_m H^2$, where $B_m=z^{m-m_0}B_{m_0}$ and $B_{m_0}$ is a Blaschke product with zeros $z_k=\frac1{\overline{w}_k}\in \mathbb{D}$. Hence $h\in\H(zB_m)=H^2\ominus zB_m H^2$.
\end{proof}

\begin{remark}
Let $b$ be a nonextreme function determined by $\varphi=\frac{b}{a}$ from Theorem \ref{wandering}. We remark that every function of the form $ua$, where $u$ is inner, is a wandering vector of $Y$. This follows from the fact that multiplication by $a$ is an isometry of $H^2$ into $\mathcal{H}(b)$. We first note that by \eqref{WW} we have   
  $$a(z)=\frac{\prod_{j=1}^m\left(1-\overline{\lambda}_jz\right)}{r(z)}\quad\text{and}\quad b(z)=\frac{p(z)}{r(z)},$$ 
where $r(z)=\sqrt{c}\prod_{j=1}^m(z-w_k)$. Consequently, for $f\in H^2$,
  \begin{align*}
	\|af\|_b^2=
	&\|af\|_2^2+\|T_{\overline{\varphi}}(af)\|_2^ 2=\|af\|_2^2+\|P\left(\overline{\varphi}af\right)\|_2^2\\
	=&\|af\|_2^2+\left\|P\left(\frac{\overline{p(z)}} {\prod_{j=1}^m\left(1-{\lambda}_j\overline{z}\right)} \frac{\prod_{j=1}^m\left(1-\overline{\lambda}_jz\right)} {r(z)}f\right)\right\|_2^2=\|af\|_2^2+\left\|P\left(\frac{z^m\overline{p(z)}}{r(z)}f\right) \right\|_2^2 \\=&\|af\|_2^2+\left\|bf\right\|_2^2=\|f\|_2^2.
  \end{align*}
It follows that for an inner function $u$ and every $k\geq 0$,
  \begin{align*}
	\langle Y^{k+1}(ua),ua\rangle_{b} 
	=\langle ua z^{k+1},ua\rangle_{b}
	=\langle u z^{k+1},u\rangle_{2}
	=\langle z^{k+1},1\rangle_{2}=0.
  \end{align*}
\end{remark}

\end{document}